

\documentclass[a4paper,12pt,onecolumn]{article}
\usepackage{etex}

\usepackage{hyperref}
\usepackage[vmargin=2cm,hmargin=2cm,headheight=14.5pt,top=2cm,headsep=.5cm]{geometry}

\usepackage{bm}
\usepackage{empheq}
\usepackage{stackrel}
\usepackage{cases}
\usepackage{mathtools}
\usepackage{amsthm,amsmath,amscd}
\usepackage{amssymb,amsfonts}
\usepackage{makeidx}
\usepackage[all]{xy}
\usepackage{perpage}
\usepackage{tikz-cd}
\tikzcdset{every label/.append style = {font = \small}}
\usepackage[symbol]{footmisc}

\MakePerPage[2]{footnote}
\usepackage{graphicx}
\DeclareMathSizes{12}{12}{8}{6}

\usepackage{cite}
\usepackage{url}
\usepackage[charter]{mathdesign}
\usepackage{accents}


\newtheoremstyle{ptheorem}{1em}{0em}{\itshape}{}{\bfseries}{.}{.5em}{\thmname{#1}\thmnumber{ #2}\thmnote{ (\hspace{-.01pt}{#3})}}

\theoremstyle{ptheorem}

\newtheorem{thm}{Theorem}[section]

\newtheorem{lem}[thm]{Lemma}
\newtheorem{cor}[thm]{Corollary}

\newtheoremstyle{hdef}{1em}{0em}{}{}{\bfseries}{.}{.5em}{\thmname{#1}\thmnumber{ #2}\thmnote{ (\hspace{-.01pt}{#3})}}
\theoremstyle{hdef}

\newtheorem{dfn}[thm]{Definition}
\newtheorem{rem}[thm]{Remark}

\makeatletter
\newtheoremstyle{premark}{1em}{0em}{
\addtolength{\@totalleftmargin}{1.5em}
\addtolength{\linewidth}{-1.5em}
\parshape 1 1.5em \linewidth}{}{\scshape}{.}{.5em}{}
\makeatother

\theoremstyle{premark}

\newtheorem{exa}[thm]{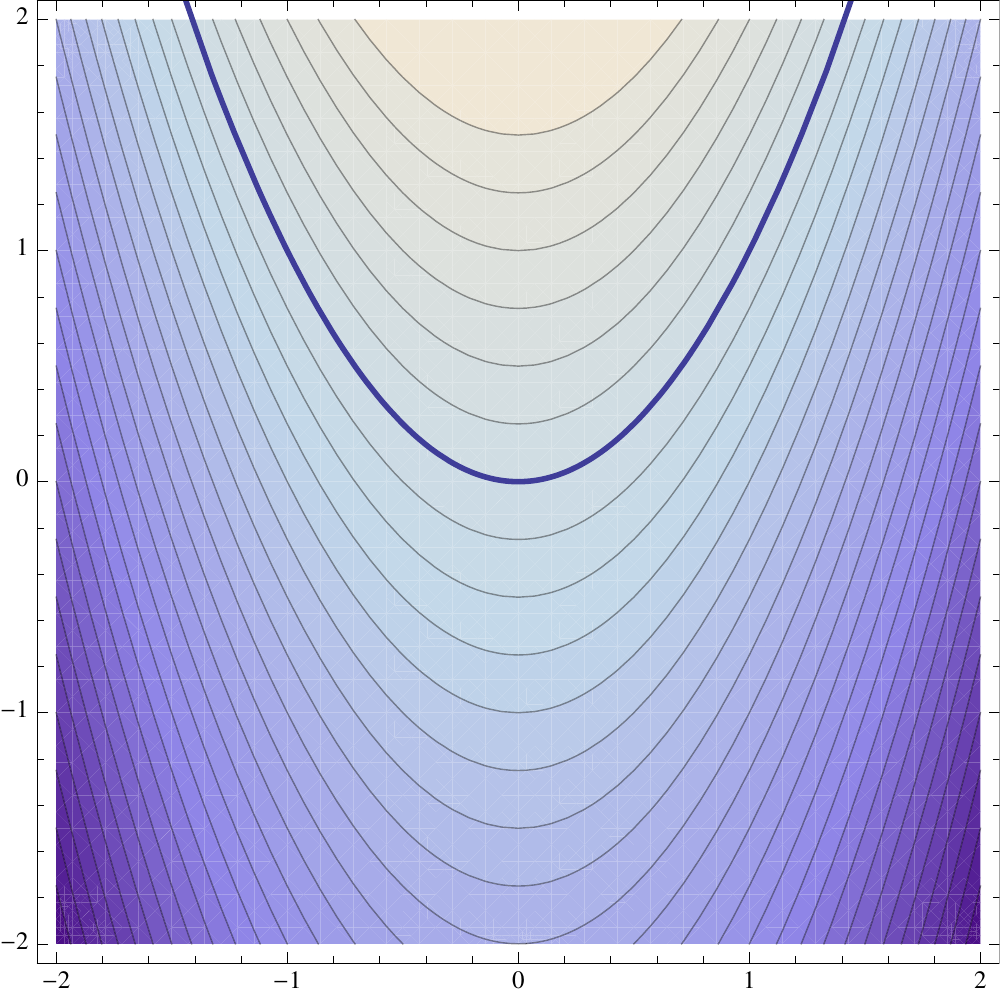}

\numberwithin{equation}{section}
\numberwithin{figure}{section}



\DeclareMathOperator{\codim}{codim}

\DeclareMathOperator{\Id}{Id}

\DeclareMathOperator{\dif}{d}

\DeclareMathOperator{\Gr}{Gr}

\newcommand{\cC}{{\mathcal C}}

\newcommand{\cM}{{\mathcal M}}

\newcommand{\bN}{{\mathbb N}}

\newcommand{\bP}{{\mathbb P}}

\newcommand{\bR}{{\mathbb R}}
\newcommand{\bS}{{\mathbb S}}

\newcommand{\bZ}{{\mathbb Z}}

\renewcommand{\a}{\alpha}
\renewcommand{\b}{\beta}
\renewcommand{\c}{\gamma}
\renewcommand{\d}{\delta}
\renewcommand{\l}{\lambda}
\newcommand{\e}{\epsilon}

\renewcommand{\phi}{\varphi}


\newcommand{\<}{\langle}
\renewcommand{\>}{\rangle}

\renewcommand{\<}{\left<}
\renewcommand{\>}{\right>}

\renewcommand{\(}{\left(}
\renewcommand{\)}{\right)}

\newcommand{\til}{\tilde}

\newcommand{\lils}[1]{\stackrel[{#1}]{}{\overline\lim}}
\newcommand{\olb}[1]{%
  \vbox{\offinterlineskip\ialign{\hfil##\hfil\cr $\rotatebox[origin=c]{90}{$]$}$\cr\noalign{\kern-.45ex}{$#1$}\cr}}}
%
\parskip=.5em

\allowdisplaybreaks
\begin{document}
\title{A Lipschitz condition along a transversal foliation implies local uniqueness for ODEs\footnote{Supported by Xunta de Galicia (Spain), project EM2014/032 and MINECO, project MTM2017-85054-C2-1-P.}}

\author{
J. \'Angel Cid\\
\normalsize
Departamento de Ma\-te\-m\'a\-ti\-cas,\\ 
\normalsize Universidade de Vigo, Spain.\\ 
\normalsize e-mail: angelcid@uvigo.es\\
F. Adri\'an F. Tojo
\\ \normalsize
Departamento de An\'alise Ma\-te\-m\'a\-ti\-ca, Facultade de Matem\'aticas,\\ 
\normalsize Universidade de Santiago de Com\-pos\-te\-la, Spain.\\ 
\normalsize e-mail: fernandoadrian.fernandez@usc.es\\
}
\date{}

\maketitle

\begin{abstract} We prove the following result: if a continuous vector field $F$ is Lipschitz when restricted to the hypersurfaces determined by a suitable foliation and a transversal condition is satisfied at the initial condition, then $F$ determines a locally unique integral curve. We also present some illustrative examples and sufficient conditions in order to apply our main result.

\end{abstract}

\noindent {\bf Keywords:} Uniqueness; Lipschitz condition; Foliation; Modulus of continuity; Rotation formula.
 
\noindent     \textit{2010 MSC:} 34A12. 

\section{Introduction}

Uniqueness for ODEs is an important  and quite old subject, but still an active field of research \cite{cons,DiNoSi14,ferreira}, being Lipschitz uniqueness theorem the cornerstone on the topic. Besides the existence of many generalizations of that theorem, see \cite{agalak,cl,hartman}, one recent and fruitful line of research has been the searching for alternative or weaker forms of  the Lipschitz condition. For instance, let $U\subset\bR^2$ be an open neighborhood of $(t_0,x_0)$ and $f:U\subset\bR^2\to\bR$ be continuous and consider the scalar initial value problem
\begin{equation}\label{eq-ivp-scalar} x'(t)=f(t,x(t)),\, x(t_0)=x_0. \end{equation}
It was proved, independently by Mortici, \cite{mortici}, and Cid and Pouso \cite{cidpouso1,cp2}, that local uniqueness holds provided that the following conditions are satisfied:
\begin{itemize}

\item$f(t,x)$ is Lipschitz with respect to $t$,

\item$f(t_0,x_0)\not=0$.

\end{itemize}

A more general result had been proved before by Stettner  and Nowak, \cite{sn}, but in a paper restricted to German readers.  They proved that if $U\subset \bR^2$ is an open neighborhood of $(t_0,x_0)$, $f:U\subset \bR^2\to\bR$ is continuous and $(u_1,u_2)\in\bR^2$ such that 

\begin{itemize}

\item $|f(t,x)-f(t+ku_1,x+ku_2)|\le L |k|\quad \mbox{on $D$}$, 

\item $u_2\not= f(t_0,x_0) u_1$,

\end{itemize}
then the scalar problem \eqref{eq-ivp-scalar} has a unique local solution. By taking either $(u_1,u_2)=(0,1)$ or $(u_1,u_2)=(1,0)$ this result covers both the classical Lipschitz uniqueness theorem and the previous alternative version. Moreover  this result has been remarkably generalized in \cite{DiNoSi14} by Dibl{\'\i}k, Nowak and Siegmund by allowing the vector $(u_1,u_2)$ to depend on $t$. 

Let us now consider the autonomous initial value problem for a system of differential equations
\begin{equation}\label{peq-aut}
z'(t)=F(z(t)),\ z(t_0)=p_0,
\end{equation}
where $n\in\bN$, $F:U\subset\bR^{n+1}\to\bR^{n+1}$ and $p_0\in U$.

%
%
%

Trough the paper we shall need the following definition: if $g:D\subset\bR^{n+1}\to E$, where $E$ is a normed space, we will say that $g$ is {\it Lipschitz in $D$ when fixing the first variable} if there exists $L>0$ such that for all $(s,x_1,x_2,\ldots,x_n),(s,y_1,y_2,\ldots,y_n)\in D$ we have that
$$\|g(s,x_1,x_2,\ldots,x_n)-g(s,y_1,y_2,\ldots,y_n)\|_{E} \le L \|(x_1,x_2,\ldots,x_n)-(y_1,y_2,\ldots,y_n)\|,$$
and where $\|\cdot\|$ stands for any norm in $\bR^{n+1}$. Moreover, for any function $g$ with values in $\bR^{n+1}$ we denote $g=(g_1,g_2,\dots,g_{n+1})$. 

The following alternative version of Lipschiz uniqueness theorem for systems has been proved by Cid in  \cite{Cid03}.
\begin{thm} \label{cor-cid} Let $U\subset\bR^{n+1}$ an open neighborhood of $p_0$ and $F:U\to\bR^{n+1}$ continuous. If  
moreover
\begin{itemize}
\item $F$ is Lipschitz in $U$ when fixing the first variable,
\item $F_{1}(p_0)\ne0$,
\end{itemize}
then there exists $\a>0$ such that problem \eqref{peq-aut}has a unique solution in $[t_0-\a,t_0+\a]$.
\end{thm}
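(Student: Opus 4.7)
The strategy is to reparametrise the system using the first coordinate as the new independent variable, transforming the ``Lipschitz when fixing the first variable'' hypothesis into a classical Lipschitz hypothesis in the dependent variables. Since $F$ is continuous, Peano's theorem guarantees at least one local solution $z$; the task is to show any two such solutions coincide on some interval $[t_0-\a,t_0+\a]$.

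First I would exploit the transversality condition $F_1(p_0)\ne 0$. By continuity of $F$ there exist $\delta>0$ and $m>0$ such that $|F_1(q)|\ge m$ on the closed ball $\overline{B}(p_0,\delta)\st U$. For any local solution $z$ of \eqref{peq-aut} one has $z_1'(t)=F_1(z(t))$, so by continuity $z_1'$ keeps a constant sign on a neighbourhood $I$ of $t_0$ as long as $z(t)\in\overline{B}(p_0,\delta)$; hence $z_1$ is a $C^1$ diffeomorphism from $I$ onto its image $J:=z_1(I)$, with $s_0:=z_1(t_0)=(p_0)_1\in J$. I would shrink $I$ and $J$ to ensure the graph of $z$ stays in $\overline{B}(p_0,\delta)$ for every solution under consideration, using a uniform a priori bound coming from the continuity of $F$.

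Next, set $\phi(s):=z_1^{-1}(s)$ and $\psi_i(s):=z_i(\phi(s))$ for $i=2,\ldots,n+1$, with $\psi=(\psi_2,\ldots,\psi_{n+1})$. A direct chain-rule computation gives
\begin{equation*}
\phi'(s)=\frac{1}{F_1(s,\psi(s))},\qquad \psi_i'(s)=\frac{F_i(s,\psi(s))}{F_1(s,\psi(s))},\qquad i=2,\ldots,n+1,
\end{equation*}
together with the initial condition $\phi(s_0)=t_0$, $\psi_i(s_0)=(p_0)_i$. The right-hand side depends only on $s$ and $\psi$, is continuous in $s$, and I claim is Lipschitz in $(\phi,\psi)$ uniformly in $s$. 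Indeed, the hypothesis that $F$ is Lipschitz in $U$ when fixing the first variable means that $F(s,\cdot)$ is $L$-Lipschitz in the remaining $n$ variables with $L$ independent of $s$; combining this with the lower bound $|F_1|\ge m$ and boundedness of $F$ on $\overline{B}(p_0,\delta)$, the quotients $1/F_1(s,\psi)$ and $F_i(s,\psi)/F_1(s,\psi)$ are Lipschitz in $\psi$ with a constant depending only on $L$, $m$ and $\|F\|_\infty$.

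By the classical Picard--Lindel\"of theorem applied in the variable $s$, the transformed system has a unique local solution $(\phi,\psi)$ on some interval $[s_0-\beta,s_0+\beta]\st J$. Since any local solution $z$ of \eqref{peq-aut} produces, through the construction above, a solution of the transformed system with the same initial data, $\phi$ and $\psi$ are uniquely determined, and hence so is $z$, because $z_1=\phi^{-1}$ on a neighbourhood of $t_0$ and $z_i=\psi_i\circ z_1$ for $i\ge 2$. Choosing $\a>0$ so that $[t_0-\a,t_0+\a]\st \phi([s_0-\beta,s_0+\beta])$ yields uniqueness on $[t_0-\a,t_0+\a]$. The main technical step I expect to be the most delicate is the uniform choice of $I$, $J$ and of the Lipschitz estimates so that every local solution falls within the domain where the transformed problem makes sense and satisfies a single Lipschitz constant; once that is arranged, the rest is essentially the inverse function theorem plus the classical uniqueness statement.
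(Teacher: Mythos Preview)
The paper does not actually prove this theorem: it is quoted from \cite{Cid03} and used as a black box (it is the tool invoked at the end of the proof of Theorem~\ref{thmgen}). So there is no in-paper argument to compare against.

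That said, your proposal is correct and is essentially the standard route to this result. The idea of using the first component $z_1$ as a new independent variable is exactly what the hypotheses are designed for: the transversality condition $F_1(p_0)\ne0$ makes $z_1$ a local $C^1$ diffeomorphism of time, and ``Lipschitz when fixing the first variable'' becomes a classical Lipschitz condition in the dependent variables $\psi=(\psi_2,\dots,\psi_{n+1})$ once $s=z_1$ is the independent variable. Your derivation of the transformed system and the Lipschitz estimate for $F_i/F_1$ (via $|F_1|\ge m$ and boundedness of $F$ on a ball) are right; note also that the $\phi$-equation is decoupled, so uniqueness really only has to be argued for the $\psi$-system, after which $\phi$ is recovered by a quadrature. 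The point you flag as delicate---choosing $I$, $J$ and the constants uniformly for \emph{every} local solution---is handled exactly as you indicate: a uniform bound $\|F\|\le M$ on $\overline B(p_0,\delta)$ keeps every solution inside the ball for $|t-t_0|\le\delta/M$, and the two-sided bound $m\le|z_1'|\le M$ then yields a common $s$-interval around $s_0$ on which all transformed solutions are defined and satisfy the same Lipschitz IVP.
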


\begin{rem} The classical Lipschitz theorem is included in the previous one. In order to see this, let $n\in\bN$, $U\subset\bR^{n+1}$ be an open set, $f:U\to\bR^n$ and $(t_0,x_0)\in U$ and consider the non-autonomous problem
\begin{equation}\label{peq}
x'(t)=f(t,x(t)),\ x(t_0)=x_0.
\end{equation}
As it is well known, problem \eqref{peq} is equivalent to the autonomous one \eqref{peq-aut}, where \begin{displaymath}F(z_1,z_2,\ldots,z_{n+1}):=(1,f(z_1,z_2,\ldots,z_{n+1})),\end{displaymath} and $p_0:=(t_0,x_0)$. Now, if $f(t,x)$ is  Lipschitz with respect to $x$ then $F(z_1,z_2,\ldots,z_{n+1})$ is  Lipschitz  when fixing the first variable and moreover $F_{1}(p_0)=1\ne0$, so Theorem \ref{cor-cid} applies.
\end{rem}

Recently, Dibl{\'\i}k, Nowak and Siegmund have obtained in \cite{SiNoDi16} a generalization of both \cite{Cid03} and \cite{sn}. Their result reads as follows:  

\begin{thm}\label{thDNS} Let $U\subset\bR^{n+1}$ an open neighborhood of $p_0$, $F:U\to\bR^{n+1}$ be continuous and $\cal{V}$ a linear  hyperplane in $\bR^{n+1}$ such that

\begin{itemize}
\item $F$ is Lipschitz continuous along $\cal{V}$, that is, there exists $L>0$ such that if $x,y\in U$ and $x-y\in \cal{V}$ then
$$\|F(x)-F(y)\|\le L \|x-y\| $$  
\item Transversality condition: $F(p_0)\not\in \cal{V}$.
\end{itemize}
then there exists $\a>0$ such that problem~\eqref{peq-aut} has a unique solution in $[t_0-\a,t_0+\a]$.
 
\end{thm}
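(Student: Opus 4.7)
The natural strategy is to reduce Theorem~\ref{thDNS} to Theorem~\ref{cor-cid} by a linear change of coordinates that carries $\mathcal{V}$ onto the standard hyperplane $\{0\}\times\bR^n$. Since $\mathcal{V}$ is a linear hyperplane of $\bR^{n+1}$, pick any $v_0\in\bR^{n+1}\setminus\mathcal{V}$ and an invertible linear map $T\colon\bR^{n+1}\to\bR^{n+1}$ sending $v_0$ to $e_1$ and some basis of $\mathcal{V}$ onto $e_2,\dots,e_{n+1}$. The substitution $w(t):=T\,z(t)$ turns~\eqref{peq-aut} into the equivalent autonomous problem
\begin{equation*}
w'(t)=\widetilde F(w(t)),\qquad w(t_0)=T p_0,
\end{equation*}
posed on the open neighborhood $\widetilde U:=T(U)$ of $Tp_0$, where $\widetilde F(w):=T\,F(T^{-1}w)$ is continuous. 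The map $z\mapsto Tz$ is a linear bijection between solutions of the two problems on any common interval, so local uniqueness for one is equivalent to local uniqueness for the other.

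The next step is to verify the two hypotheses of Theorem~\ref{cor-cid} for $\widetilde F$ at $Tp_0$. By construction of $T$ one has $v\in\mathcal{V}$ if and only if $(Tv)_1=0$, so the transversality assumption $F(p_0)\notin\mathcal{V}$ forces $\widetilde F_1(Tp_0)=(T\,F(p_0))_1\neq 0$. For the Lipschitz condition when fixing the first variable, take $w,w'\in\widetilde U$ with $w_1=w_1'$; then $w-w'\in\{0\}\times\bR^n$, whence $T^{-1}w-T^{-1}w'=T^{-1}(w-w')\in\mathcal{V}$, and the Lipschitz hypothesis on $F$ along $\mathcal{V}$ yields
\begin{equation*}
\|\widetilde F(w)-\widetilde F(w')\|=\|T(F(T^{-1}w)-F(T^{-1}w'))\|\le L\,\|T\|\,\|T^{-1}\|\,\|w-w'\|.
\end{equation*}

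Applying Theorem~\ref{cor-cid} to $\widetilde F$ now produces $\alpha>0$ so that the transformed problem has a unique solution on $[t_0-\alpha,t_0+\alpha]$, and transporting back through $T^{-1}$ gives the desired conclusion for~\eqref{peq-aut}. There is no substantial obstacle: the only minor subtlety is that the definition of ``Lipschitz when fixing the first variable'' compares $\widetilde F(w)-\widetilde F(w')$ with the norm of the last $n$ coordinates of $w-w'$, whereas the estimate above uses $\|w-w'\|$ in $\bR^{n+1}$; since $w-w'\in\{0\}\times\bR^n$ and all norms on a finite-dimensional space are equivalent, this merely absorbs into the Lipschitz constant. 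In essence, the argument shows that Theorem~\ref{cor-cid} is the special case $\mathcal{V}=\{0\}\times\bR^n$ of Theorem~\ref{thDNS}, and a linear change of variables removes the arbitrariness of the hyperplane.
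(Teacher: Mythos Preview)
Your proof is correct. Note, however, that the paper does not give its own proof of Theorem~\ref{thDNS}: it is quoted as a result of Dibl\'{\i}k, Nowak and Siegmund from~\cite{SiNoDi16}. That said, your argument is precisely the linear specialization of the paper's proof of its main generalization, Theorem~\ref{thmgen}: there one performs the change of variables $z=\Phi(y)$, obtains the transformed system $y'=\Phi'(y)^{-1}(F\circ\Phi)(y)$, checks that the transformed field is Lipschitz when fixing the first variable and that its first component at the new initial point is nonzero, and then invokes Theorem~\ref{cor-cid}. When the foliation consists of parallel translates of a linear hyperplane $\mathcal{V}$, the diffeomorphism $\Phi$ may be taken linear (your $T^{-1}$), $\Phi'$ is constant, and the argument collapses to exactly what you wrote. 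So your approach and the paper's are the same in spirit; the paper simply carries it out in the nonlinear setting.
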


The previous theorem has the following geometric meaning: uniqueness for the autonomous system \eqref{peq-aut} follows provided that the continuous vector field $F$ is Lipschitz when restricted to a family of parallel hyperplanes to $\cal{V}$ that covers $U$ and that the vector field at the initial condition $F(p_0)$ is transversal to $\cal{V}$.

 Our main goal in this paper is to extend Theorem \ref{thDNS} from the linear foliation generated by the hyperplane $\cal{V}$ to a general $n$-foliation. The paper is organized as follows: in Section 2 we present our main result which relies on an appropriate change of coordinates and Theorem \ref{cor-cid}. We will show by examples that our result is in fact a meaningful generalization of  Theorem \ref{thDNS}. In Section 3 we present some useful results about  Lipschitz functions,  including the definition of a modulus of  Lipschitz continuity along a hyperplane that will be used in Section 4 for obtaining explicit sufficient conditions on $F$ for the existence of a suitable $n$-foliation. Another key ingredient for that result shall be a general rotation formula proved too at Section 4.

Through the paper $\<\cdot,\cdot \>$ shall denote the usual scalar product in the Euclidean space.

\section{The main result: a general uniqueness theorem}

\begin{dfn}Let $p_0\in \bR^{n+1}$. Assume there exist open subsets $V\subset\bR^n$, $U\subset\bR^{n+1}$, an open interval $J\subset \bR$ with $0\in J$ and a family of differentiable functions $\{g_s:V\to U\}_{s\in J}$ such that $g_0(0)=p_0\in U$ and $\Phi:(s,y)\in J\times V \to g_s(y)\in U$ is a diffeomorphism. Then we say $\{g_s\}_{s\in J}$ is a \emph{local $n$-foliation of U at $p_0$}.


\end{dfn}
\begin{rem}An observation regarding notation. If $\Phi:\bR^{n+1}\to\bR^{n+1}$ is a diffeomorphism, we denote by $\Phi'$ its derivative and by $\Phi^{-1}$ its inverse. Also, we write $(\Phi^{-1})'$ for the derivative of the inverse. Observe that $\Phi'$ takes values in $\cM_{n+1}(\bR)$ so, although we cannot consider the functional inverse of $\Phi'$, we can consider the inverse matrix, whenever it exists, of every $\Phi'(x)$ for $x\in\bR^{n+1}$. We denote this function by  $(\Phi')^{-1}$. Clearly, the chain rule implies that
\begin{displaymath}(\Phi')^{-1}(x)=(\Phi^{-1})'(\Phi(x)).\end{displaymath}
\end{rem}

The following is our main result.
\begin{thm}\label{thmgen} Let $U\subset\bR^{n+1}$, $V\subset\bR^{n}$ be open sets, $p_0\in U$, $F:U\subset\bR^{n+1}\to\bR^{n+1}$ a continuous function and $\{g_s:V\to U\}_{s\in J}$ a local $n$-foliation of $U$ at $p_0$ which defines the diffeomorphism $\Phi:J\times V\to U$. If the following assumptions hold,

\begin{itemize}
\item{(C1)}  Transversality condition: \begin{equation}\label{transcon}\<\(\frac{\partial\Phi_1^{-1}}{\partial z_1}(p_0),\dots,\frac{\partial\Phi_1^{-1}}{\partial z_{n+1}}(p_0)\),F(p_0)\>\ne0,\end{equation}
\item{(C2)} Lipschitz condition along the foliation:
$F\circ \Phi$ and $(\Phi')^{-1}$ are Lipschitz in a neighborhood of zero when fixing the first variable, 

\end{itemize}
then there exists $\a>0$ such that problem~\eqref{peq-aut} has a unique solution in $[t_0-\a,t_0+\a]$.

\end{thm}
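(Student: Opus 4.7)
The plan is to reduce to Theorem \ref{cor-cid} by performing the change of variables dictated by the foliation diffeomorphism $\Phi$. If $z$ is any solution of \eqref{peq-aut} contained in $U$, define $w:=\Phi^{-1}\circ z$; then $w(t_0)=\Phi^{-1}(p_0)=0$ and differentiation yields the transformed autonomous system
\begin{equation*}
w'(t)=G(w(t)),\qquad G(w):=(\Phi')^{-1}(w)\,(F\circ\Phi)(w).
\end{equation*}
Conversely, if $w$ solves this transformed problem then $z:=\Phi\circ w$ solves \eqref{peq-aut}, so the diffeomorphism $\Phi$ sets up a one-to-one correspondence between local solutions and local uniqueness for \eqref{peq-aut} follows from local uniqueness for the $G$-system at $w_0=0$. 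My task is therefore to check the two hypotheses of Theorem \ref{cor-cid} for $G$ at $0$.

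For the Lipschitz condition, I would write $G=A\cdot v$ with $A:=(\Phi')^{-1}$ and $v:=F\circ\Phi$, both continuous (since $\Phi$ is a diffeomorphism, $\Phi'$ is invertible and $A$ is continuous) and hence bounded on a compact neighborhood of $0$. The standard bilinear estimate
\begin{equation*}
\|A(s,y_1)v(s,y_1)-A(s,y_2)v(s,y_2)\|\le\|A(s,y_1)\|\,\|v(s,y_1)-v(s,y_2)\|+\|A(s,y_1)-A(s,y_2)\|\,\|v(s,y_2)\|,
\end{equation*}
together with (C2) applied to $A$ and $v$ separately, shows that $G$ is Lipschitz when fixing the first variable in a neighborhood of $0$. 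For the transversality hypothesis $G_1(0)\ne0$, I would use $\Phi(0)=p_0$ and the remark preceding the statement, namely $(\Phi')^{-1}(0)=(\Phi^{-1})'(p_0)$. The first row of $(\Phi^{-1})'(p_0)$ is precisely the gradient $\bigl(\tfrac{\partial\Phi_1^{-1}}{\partial z_1}(p_0),\dots,\tfrac{\partial\Phi_1^{-1}}{\partial z_{n+1}}(p_0)\bigr)$, so the first component of $G(0)=(\Phi^{-1})'(p_0)F(p_0)$ equals the inner product in \eqref{transcon}, which is nonzero by (C1). Theorem \ref{cor-cid} then yields $\alpha>0$ and a unique solution $w$ on $[t_0-\alpha,t_0+\alpha]$, and $z=\Phi\circ w$ is the unique local solution of \eqref{peq-aut}.

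The only real subtlety I anticipate is the domain bookkeeping at the moment of transferring solutions through $\Phi^{-1}$: one must shrink $\alpha$ so that an arbitrary local solution $z$ of \eqref{peq-aut} stays inside $\Phi(J\times V)$ (hence $\Phi^{-1}\circ z$ is defined and differentiable) and simultaneously $w=\Phi^{-1}\circ z$ remains inside the neighborhood of $0$ where (C2) is assumed. Both requirements are satisfied by continuity and the fact that $z(t_0)=p_0=\Phi(0)$; once this is arranged the application of Theorem \ref{cor-cid} is direct, and the rest of the argument is just the chain-rule computation that turns the hypotheses of Theorem \ref{thmgen} into those of Theorem \ref{cor-cid}.
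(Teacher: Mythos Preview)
Your proof is correct and follows essentially the same route as the paper: perform the change of variables $w=\Phi^{-1}\circ z$, obtain the transformed vector field $G(w)=(\Phi')^{-1}(w)\,(F\circ\Phi)(w)$, and verify the hypotheses of Theorem~\ref{cor-cid} via (C1) for $G_1(0)\ne0$ and (C2) together with a bilinear estimate for the Lipschitz condition. Your write-up is in fact slightly more careful than the paper's on two points---the explicit bilinear bound and the domain bookkeeping needed to push an arbitrary local solution through $\Phi^{-1}$---but the underlying argument is the same.
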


\begin{proof} Consider the change of coordinates \begin{align}\label{coc}z=(z_1,\dots,z_{n+1})=\Phi(s,y_1,\dots,y_n):=g_s(y_1,\dots,y_n).\end{align}
 Since $\{g_s\}_{s\in J}$ is a foliation, $\Phi$ is a diffeomorphism. Then, considering $y=(s,y_1,\dots,y_n)$, differentiating \eqref{coc} with respect to $t$ and taking into account equation \eqref{peq-aut},
 \begin{equation}\label{eqderiv}\frac{\dif z}{\dif t}=\Phi'(y)\frac{\dif y}{\dif t}=F(z)=(F\circ \Phi)(y).\end{equation}
Since $\Phi$ is a diffeomorphism, $\Phi'(y)$ is an invertible matrix for every $y$, so
\[\frac{\dif y}{\dif t}=\Phi'(y)^{-1}(F\circ \Phi)(y).\]
By definition of $g_s$, $\Phi(0)=p_0$, so we can consider the problem
\begin{equation}\label{redeq}\frac{\dif y}{\dif t}(t)=h(y),\ y(t_0)=0,\end{equation}
where
\begin{equation*}h(y)=\Phi'(y)^{-1}F(  \Phi(y)).\end{equation*}
Now, by (C2) we have that $h$ is the product of locally Lipschitz functions when fixing the first variable. Furthermore, if $e_1=(1,0,\dots,0)\in\bR^n$ and taking into account (C1),
\[h_1(0)=e_1^T\Phi'(0)^{-1}F(p_0)=e_1^T(\Phi^{-1})'(p_0)F(p_0)=\<\(\frac{\partial\Phi_1^{-1}}{\partial z_1}(p_0),\dots,\frac{\partial\Phi_1^{-1}}{\partial z_{n+1}}(p_0)\),F(p_0)\>\ne0.\]

Hence, we can apply Theorem \ref{cor-cid} to problem \eqref{redeq} and conclude that problem~\eqref{peq-aut} has, locally, a unique solution.
\end{proof}
\begin{rem}
1) Condition \eqref{transcon} can be easily interpreted geometrically: the vector \begin{displaymath}\(\frac{\partial\Phi_1^{-1}}{\partial z_1}(p_0),\dots,\frac{\partial\Phi_1^{-1}}{\partial z_{n+1}}(p_0)\),\end{displaymath} is normal to the hypersurface given by  $g_0(V)$ at $p_0$. So, condition \eqref{transcon} means that the vector $F(p_0)$ is not tangent to that hypersurface, and therefore it is called the \emph{'transversality condition'}. \medbreak

\noindent 2) Notice that from \cite[Example 3.1]{Cid03} we know that if the transversality condition \eqref{transcon}  does not hold then the Lipschitz condition along the foliation, that is (C2), is not enough to ensure uniqueness. On the other hand, by \cite[Example 3.4]{Cid03} we also know that (C1) and a Lipschitz condition along a local (n-1)-foliation do not imply uniqueness. So, in some sense, conditions (C1) and (C2) are sharp.

\end{rem}
Theorem \ref{thmgen} generalizes the main result in \cite{SiNoDi16}, where only foliations consisting of hyperplanes are considered. In the next example we show the limitations of linear (or affine) coordinate changes which are used in \cite{SiNoDi16}.
\begin{exa}\label{exa-change}  Let $F(x,y):=1+(y-x^2)^{\frac{2}{3}}$. Is there a linear change of coordinates $\Phi$ such that $F\circ\Phi$ is Lipschitz in a neighborhood of zero when fixing the first variable? The answer is no. Any linear change of variables $\Phi$ will be given by two linearly independent vectors $v, w\in\bR^2$ as $\Phi(z,t)=z w+t v$. If $F\circ\Phi$  is Lipschitz in a neighborhood of zero when fixing the first variable, that is, $z$, that implies that the directional derivative of $F$ at any point of the neighborhood in the direction of $v$, whenever it exists, is a lower bound for any Lipschitz constant.  To see that this cannot happen, take $S=\{(x,y)\in\bR^2\ : y=x^2\}$ and realize that $F$ is differentiable in $\bR^2\backslash S$, with
\[\nabla F(x,y)=\frac{2}{3}(y-x^2)^{-\frac{1}{3}}(-2x,1),\quad \mbox{for} \, \, (x,y)\in\bR^2\backslash S.\]
Let $v=(v_1,v_2)\in\bR^2$. The directional derivative of $F$ at $(x,y)$ in the direction of $v$ is
\[D_vF(x,y)=\<\nabla F(x,y),v\>=\frac{2}{3}(y-x^2)^{-\frac{1}{3}}(v_2-2v_1x),\quad \mbox{for} \, \, (x,y)\in\bR^2\backslash S.\]
Now consider a neighborhood $N$ of $0$. In particular, we can consider the points of the form $(x,y)=(\l,\l^2+\mu)\in N\backslash S$ for $\mu\ne0$ and $\l\in(-\e,\e)$, so
\[D_vF(x,y)=\frac{2}{3}\frac{v_2-2 \lambda  v_1}{ \mu ^{1/3}}.\]
This quantity is unbounded in $ N\backslash S$ unless the numerator is $0$ for every $\l\in(-\e,\e)$, but that means that $v=0$, so $v$ and $w$ cannot be linearly independent.
Hence, no linear change of coordinates $\Phi$ makes $F\circ\Phi$ Lipschitz in a neighborhood of zero when fixing the first variable.\par

\begin{figure}[htbp]
 \begin{center}
  \includegraphics[scale=0.5]{Example}
  \end{center}
\end{figure}

Nevertheless, take $(x,y)=\Phi(z,t)=g_z(t)=(t,z+t^2)$. We have $\Phi^{-1}(x,y)=(y-x^2,x)$ and both are differentiable, so $\Phi$ is a diffeomorphism. Now,
$(F\circ\Phi)(z,t)=1+z^\frac{2}{3}$, which is clearly Lipschitz when fixing the first variable. 

In the figure you can see the parabolas $g_z(t)$ foliating the plane, where $g_0(t)$ is the thicker one.
\end{exa}

\begin{exa} With what we learned from Example \ref{exa-change}, it is easy to see that uniqueness for the scalar initial value problem
\begin{equation}\label{eqex1}x'(t)=1+(x(t)-t^2)^{\frac{2}{3}}, \quad x(0)=0,\end{equation}
can not be dealt with \cite[Theorem 2]{SiNoDi16} neither with \cite[Theorem 1]{DiNoSi14}. However, by using the local 1-foliation associated to diffeomorphism $\Phi$ given in Example \ref{exa-change}, it is easy to show that conditions (C1) and (C2) of Theorem \ref{thmgen} are satisfied. Therefore, we have the local uniqueness of solution.
\end{exa}

\section{Some results about Lipschitz functions}

We will now establish some properties of Lipschitz functions that will be useful for checking condition (C2) in Theorem \ref{thmgen}. Before that, consider the following Lemma.
\begin{lem}\label{leminv} Let $A,B,C\in\cM_n(\bR)$, $A$ and $C$ invertible. Then
\[ \|ABC\| \ge \frac{\|B\|}{\|A^{-1}\|  \|C^{-1}\|},\]
where $\|\cdot\|$ is the usual matrix norm.
\end{lem}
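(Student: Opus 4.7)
The plan is to exploit the submultiplicativity of the operator norm, namely $\|XY\|\le\|X\|\|Y\|$ for any $X,Y\in\cM_n(\bR)$. Since $A$ and $C$ are invertible, I can recover $B$ from $ABC$ by multiplying on the left by $A^{-1}$ and on the right by $C^{-1}$.

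Concretely, first I would write the identity
\[ B = A^{-1}(ABC)C^{-1},\]
which is valid because $A^{-1}A = I_n = CC^{-1}$. Then I would apply submultiplicativity twice to obtain
\[ \|B\| = \|A^{-1}(ABC)C^{-1}\| \le \|A^{-1}\|\,\|ABC\|\,\|C^{-1}\|.\]
Finally, since $A$ and $C$ are invertible, both $\|A^{-1}\|$ and $\|C^{-1}\|$ are strictly positive, so dividing through yields the desired bound
\[ \|ABC\| \ge \frac{\|B\|}{\|A^{-1}\|\,\|C^{-1}\|}.\]

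There is no real obstacle here: the statement is essentially an immediate consequence of submultiplicativity, and the only ingredient beyond that is the invertibility of $A$ and $C$, which guarantees that the factors $\|A^{-1}\|$ and $\|C^{-1}\|$ in the denominator are finite and nonzero.
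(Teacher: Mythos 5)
Your argument is correct and is exactly the paper's proof: both write $B=A^{-1}(ABC)C^{-1}$ and apply submultiplicativity of the matrix norm to get $\|B\|\le\|A^{-1}\|\,\|ABC\|\,\|C^{-1}\|$. Nothing further is needed.
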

\begin{proof}It is enough to observe that
\[\|B\|=\|A^{-1}ABCC^{-1}\|\le\|A^{-1}\|\|ABC\|\|C^{-1}\|.\]
\end{proof}
\begin{lem}\label{lemeq}Let $U$ be an open subset of $\bR^n$ and $g:U\to GL_n(\bR)$.
\begin{enumerate}
\item If $g$ is locally Lipschitz and $g^{-1}$ (the inverse matrix function) is locally bounded, then $g^{-1}$  is locally Lipschitz.
\item If $g$ is locally Lipschitz when fixing the first variable and $g^{-1}$ is locally bounded, then $g^{-1}$ is locally Lipschitz when fixing the first variable.
\end{enumerate}
\end{lem}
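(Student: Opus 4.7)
The plan is to reduce the statement to a single algebraic identity followed by one application of Lemma \ref{leminv}. The key identity is
\[
g(y)-g(x) \;=\; g(x)\bigl(g^{-1}(x)-g^{-1}(y)\bigr)g(y),
\]
which is immediate from $g(x)g^{-1}(x) = g(y)g^{-1}(y) = I$. Feeding this factorization into Lemma \ref{leminv} with $A := g(x)$, $B := g^{-1}(x)-g^{-1}(y)$ and $C := g(y)$ yields the upper bound
\[
\|g^{-1}(x)-g^{-1}(y)\| \;\leq\; \|g^{-1}(x)\|\,\|g^{-1}(y)\|\,\|g(y)-g(x)\|,
\]
which will drive both parts of the lemma.

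For part (1), I would fix an arbitrary $x_0\in U$ and, using the two hypotheses, shrink to a common neighborhood $W\subset U$ of $x_0$ on which $g$ is Lipschitz with some constant $L>0$ and on which $\|g^{-1}(\cdot)\|\leq M$ for some $M>0$; such a $W$ is obtained simply by intersecting the two local neighborhoods given by the hypotheses. Substituting these bounds into the key estimate produces $\|g^{-1}(x)-g^{-1}(y)\| \leq M^{2} L\,\|x-y\|$ for all $x,y\in W$, which gives the local Lipschitz constant $M^{2}L$ for $g^{-1}$ at $x_0$.

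For part (2), the argument is essentially unchanged. The local boundedness hypothesis on $g^{-1}$ is unconditional and the key estimate holds for any pair $x,y\in W$, so restricting attention to pairs sharing their first coordinate immediately transfers the Lipschitz behaviour of $g$ along such pairs to $g^{-1}$, with the same constant $M^{2}L$.

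There is no real obstacle here: the whole content is in the algebraic identity for the difference of inverses and in Lemma \ref{leminv}, which together package the classical fact that matrix inversion is locally Lipschitz on $GL_n(\bR)$. The only bookkeeping is selecting a single neighborhood of the given point on which both hypotheses are simultaneously valid, and this is automatic since both properties are local.
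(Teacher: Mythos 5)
Your proposal is correct and follows essentially the same route as the paper: the identity $g(x)-g(y)=g(x)\bigl(g(y)^{-1}-g(x)^{-1}\bigr)g(y)$ combined with Lemma \ref{leminv} and the local bound on $g^{-1}$, yielding the constant $M^{2}L$ (the paper's $k_1k_2^2$). The only cosmetic difference is that you work on a neighborhood of a fixed point while the paper works on compact subsets of $U$; both formulations of locality are interchangeable here.
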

\begin{proof}
1. Let $K$ be a compact subset of $U$, $k_1$ be a Lipschitz constant for $g$ in $K$ and $k_2$ a bound for $g^{-1}$ in $K$. Then, for $x,y\in K$, using Lemma \ref{leminv},
 \begin{align*}k_1\|x-y\| & \ge\|g(x)-g(y)\|=\|g(x)(g(y)^{-1}-g(x)^{-1})g(y)\|\ge\frac{\|g(y)^{-1}-g(x)^{-1}\|}{k_2^2}. \end{align*}
 Hence, $\|g(x)^{-1}-g(y)^{-1}\|\le k_1k_2^2\|x-y\|$ in $K$ and $g^{-1}$ is locally Lipschitz.\par
 2. We proceed as in 2. Let $K$ be a compact subset of $U$, $(t,x),(t,y)\in K$, $k_1$ be a Lipschitz constant for $g$ in $K$ when fixing $t$  and $k_2$ a bound for $g^{-1}$ in $K$. Then, 
  \begin{align*}k_1\|x-y\| & \ge\|g(t,x)-g(t,y)\|=\|g(t,x)(g(t,y)^{-1}-g(t,x)^{-1})g(t,y)\|\ge\frac{\|g(t,y)^{-1}-g(t,x)^{-1}\|}{k_2^2}. \end{align*}
  Hence, $\|g(t,x)^{-1}-g(t,y)^{-1}\|\le k_1k_2^2\|x-y\|$ and $g^{-1}$ is locally Lipschitz when fixing the first variable.\par
\end{proof}
\begin{cor}\label{coreq}Let $U$ be an open subset of $\bR^n$, $f:U\to f(U)\subset\bR^n$ be a diffeomorphism (notice that, in that case, $f':U\to GL_n(\bR)$).
\begin{enumerate}

\item If $f'$ is locally Lipschitz and $(f')^{-1}$ is locally bounded, then $(f')^{-1}$ is locally Lipschitz.

\item If $f'$ is locally Lipschitz and $(f')^{-1}$ is locally bounded, then $(f^{-1})'$ is locally Lipschitz.

\item If $f'$ is locally Lipschitz when fixing the first variable and $(f')^{-1}$ is locally bounded, then $(f')^{-1}$ is locally Lipschitz when fixing the first variable.

\end{enumerate}
\end{cor}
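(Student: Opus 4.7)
The plan is to deduce all three parts of Corollary \ref{coreq} as essentially direct consequences of Lemma \ref{lemeq} applied to the matrix-valued function $g = f'$, together with the chain-rule identity for the derivative of the inverse diffeomorphism. Since $f$ is a diffeomorphism, $f'$ does take values in $GL_n(\bR)$ on $U$, so the hypotheses of Lemma \ref{lemeq} make sense in this setting.

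For part 1, I would simply set $g = f'$ in Lemma \ref{lemeq}(1). The hypotheses of the corollary (that $f'$ is locally Lipschitz and $(f')^{-1}$ is locally bounded) match verbatim those of the lemma, and the conclusion that $(f')^{-1}$ is locally Lipschitz is immediate. Part 3 is entirely analogous: apply Lemma \ref{lemeq}(2) with $g = f'$, replacing ``locally Lipschitz'' by ``locally Lipschitz when fixing the first variable'' on both sides.

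Part 2 requires one extra step. By the chain rule identity recorded in the remark following the definition of local $n$-foliation, we have
\[(f^{-1})'(y) = (f')^{-1}(f^{-1}(y)) \quad \text{for every } y \in f(U),\]
so $(f^{-1})'$ is the composition $(f')^{-1} \circ f^{-1}$. By part 1, the outer factor $(f')^{-1}$ is locally Lipschitz on $U$. The inner factor $f^{-1}$ is $C^1$ on $f(U)$ (because $f$ is a diffeomorphism), hence locally Lipschitz there. A composition of locally Lipschitz maps is locally Lipschitz, because on any compact set $K \subset f(U)$ the image $f^{-1}(K)$ is compact in $U$, so one can successively apply a Lipschitz constant for $(f')^{-1}$ on $f^{-1}(K)$ and one for $f^{-1}$ on $K$ to bound $\|(f^{-1})'(y_1)-(f^{-1})'(y_2)\|$ by a multiple of $\|y_1-y_2\|$.

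I do not anticipate a real obstacle here, since Lemma \ref{lemeq} has already absorbed the genuine content (the matrix inversion identity combined with Lemma \ref{leminv}). The only mildly delicate point is the compactness/local-boundedness argument in part 2, and in particular verifying that when one switches from $U$ to $f(U)$ via $f^{-1}$, compact sets stay compact and Lipschitz constants remain finite; this is immediate from continuity of $f^{-1}$ and standard properties of the Lipschitz seminorm under composition.
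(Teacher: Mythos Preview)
Your proposal is correct and matches the paper's proof essentially verbatim: parts 1 and 3 are direct applications of Lemma \ref{lemeq} to $g=f'$, and part 2 uses the chain-rule identity $(f^{-1})'=(f')^{-1}\circ f^{-1}$ together with part 1 and the local Lipschitz property of $f^{-1}$ to conclude via composition. There is no substantive difference in approach.
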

\begin{proof}

1. Just apply Lemma \ref{lemeq}.1 to $g=f'$.\par

2. Notice that \begin{displaymath}(f^{-1})'(x)=(f')^{-1}(f^{-1}(x)),\end{displaymath} and that  $(f')^{-1}$ is locally Lipschitz by the previous claim. On the other hand, 
since $f'$ is locally continuous we have that $f$ is locally a ${\cal C}^1$-diffeomorphism, and thus
$f^{-1}$ is locally Lipschitz. Therefore $(f^{-1})'$ is locally Lipschitz since it is the composition of two locally Lipschitz functions. \par


3. Just apply Lemma \ref{lemeq}.2 to $g=f'$.
\end{proof}

\subsection{A modulus of continuity for Lipschitz functions along an hyperplane}

Let $U$ be an open subset of $\bR^{n+1}$, $p_0\in U$ and consider the tangent space of $U$ at $p$, which can be identified with $\bR^{n+1}$. Consider now the real Grassmannian $\Gr(n,n+1)$, that is, the manifold of hyperplanes of $\bR^{n+1}$. We know that $\Gr(n,n+1)\cong \Gr(1,n+1)=\bP^n$, that is, we can identify unequivocally each hyperplane with their perpendicular lines, which are elements of the projective space $\bP^n$.\par
\begin{dfn}
Consider $B_{n+1}(p,\d)\subset\bR^{n+1}$ to be the open ball of center $p$ and radius $\d$. Then, for a function $F:U\to\bR^{n+1}$ and every $p\in U$, $v\in\bP^n$ and $\d\in\bR^+$ we define the \textit{modulus of continuity} 
\[\omega_F(p,v,\d):=\sup_{\substack{x,y\in B_{n+1}(p,\d)\\ x-p,y-p\perp v\\x\ne y}}\frac{\|F(x)-F(y)\|}{\|x-y\|}\in[0,+\infty].\]
We also define
\[\omega_F(p,v):=\lim_{\d\to0} \omega_F(p,v,\d)=\lim_{\d\to0}\sup_{\substack{x,y\in B_{n+1}(p,\d)\\ x-p,y-p\perp v\\x\ne y}}\frac{\|F(x)-F(y)\|}{\|x-y\|}=\lils{\substack{(x,y)\to(p,p)\\x-p,y-p\perp v\\x\ne y}}\frac{\|F(x)-F(y)\|}{\|x-y\|}\in[0,+\infty].\]
\end{dfn}
\begin{rem}
If $\omega_F(p,v)<+\infty$, then there exist $\d,\e\in\bR^+$ such that
\[\|F(x)-F(y)\|\le(\omega_F(p,v)+\e)\|x-y\|,\ x,y\in B_{n+1}(p,\d),\ x-p,y-p\perp v.\]
Equivalently,
\[\|F(x+p)-F(y+p)\|\le(\omega_F(p,v)+\e)\|x-y\|,\ x,y\in B_{n+1}(0,\d),\ x,y\perp v.\]
Let $A$ be a orthonormal matrix such that its first column is parallel to $v$. In that case, 
since $A$ is orthogonal, $x\perp e_1$ implies that $Ax \perp v$.Then,
\begin{equation*}\|F(Ax+p)-F(Ay+p)\|\le(\omega_F(p,v)+\e)\|A(x-y)\|,\ x,y\in B_{n+1}(0,\d),\ x,y\perp e_1.\end{equation*}
That is, taking into account that $\|A\|=1$,
\begin{equation*}\|F(A(0,x)+p)-F(A(0,y)+p)\|\le(\omega_F(p,v)+\e)\|x-y\|,\ x,y\in B_{n}(0,\d).\end{equation*}
Hence, if $\phi(x)=Ax+p$ then $F\circ \phi$ is locally Lipschitz in an neighborhood of the origin \textit{when the first variable is equal  to zero}. 


\end{rem}

The following lemma illustrates the relation between the modulus of continuity $\omega_F$ and the partial derivatives of $F$.
\begin{lem}\label{lemreg}Assume $F$ is continuously differentiable in a neighborhood $N$ of $p$. Then
\[\omega_F(p,v)=\sup_{\substack{w\perp v\\\|w\|=1}}\|D_wF(p)\|.\]
\end{lem}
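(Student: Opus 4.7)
The plan is to prove the equality by showing each inequality separately, with the lower bound coming from the definition of directional derivative applied along a single one-sided segment, and the upper bound coming from the integral form of the mean value theorem inside the affine hyperplane $p + v^\perp$.

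For the lower bound $\omega_F(p,v) \geq \sup_{w\perp v,\ \|w\|=1}\|D_wF(p)\|$, I would fix a unit vector $w\perp v$ and plug the pair $x = p + tw$, $y = p$ into the difference quotient defining $\omega_F(p,v,\delta)$: both points lie in $B_{n+1}(p,\delta)$ for $0<t<\delta$, and $x-p = tw$, $y-p = 0$ are both orthogonal to $v$. Thus $\omega_F(p,v,\delta) \geq \|F(p+tw)-F(p)\|/t$ for every $0<t<\delta$. Given $\varepsilon>0$, since the right-hand side converges to $\|D_wF(p)\|$ as $t\to 0^+$, we can take $t$ small enough to exceed $\|D_wF(p)\|-\varepsilon$, which forces $\omega_F(p,v,\delta) \geq \|D_wF(p)\|-\varepsilon$ for all sufficiently small $\delta$. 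Passing to the limit $\delta\to 0$ and then to $\varepsilon\to 0$ and $w$ yields the desired lower bound.

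For the upper bound, set $M := \sup_{w\perp v,\ \|w\|=1}\|D_wF(p)\|$, which is finite because $\|D_wF(p)\| \leq \|F'(p)\|$. Fix $\varepsilon>0$. By continuity of $F'$ at $p$, there is $\delta>0$ with $B_{n+1}(p,\delta)\subset N$ and $\|F'(q)-F'(p)\|<\varepsilon$ for all $q\in B_{n+1}(p,\delta)$. Given $x,y\in B_{n+1}(p,\delta)$ with $x-p,y-p\perp v$ and $x\neq y$, the segment $\gamma(t)=x+t(y-x)$, $t\in[0,1]$, lies in the convex ball $B_{n+1}(p,\delta)$, and its direction $w := (y-x)/\|y-x\|$ is a unit vector orthogonal to $v$. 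The fundamental theorem of calculus then gives
\[F(y)-F(x) = \int_0^1 F'(\gamma(t))(y-x)\,\dif t = \|y-x\|\int_0^1 D_wF(\gamma(t))\,\dif t.\]
Since $\|D_wF(\gamma(t))-D_wF(p)\| \leq \|F'(\gamma(t))-F'(p)\| < \varepsilon$, we deduce $\|D_wF(\gamma(t))\| \leq M+\varepsilon$, whence $\|F(y)-F(x)\| \leq (M+\varepsilon)\|y-x\|$. Taking sup gives $\omega_F(p,v,\delta)\leq M+\varepsilon$, and sending $\delta\to 0$ and $\varepsilon\to 0$ yields $\omega_F(p,v)\leq M$.

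No step is particularly delicate; the only thing to be careful about is that the segment $\gamma$ genuinely lies in the allowed region (both in the ball and with direction perpendicular to $v$), which is immediate from convexity of the ball and the fact that $y-x = (y-p)-(x-p) \in v^\perp$. The continuity of $F'$ at $p$ is what lets the integral be bounded tightly by $M+\varepsilon$ rather than by the cruder $\|F'(p)\|+\varepsilon$, which is essential for the upper bound to match the lower bound exactly.
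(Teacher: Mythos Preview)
Your proof is correct and follows essentially the same strategy as the paper: both directions are obtained from the continuity of $F'$ at $p$, with the lower bound coming from the difference quotient along a ray $p+tw$ and the upper bound from a mean-value estimate along the segment joining $x$ and $y$ in $p+v^\perp$. The only cosmetic difference is that you use the integral form of the mean value theorem while the paper invokes the mean value inequality directly; your version is arguably cleaner for vector-valued $F$.
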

\begin{proof} Since $F'(z)$ is continuous at $p$, for $\{\e_n\}\to 0$ there exists $\{\d_n\}\to 0$ such that if $z\in  B_{n+1}(p,\d_n)$ and
$\|w\|=1$ then $\|F'(z)(w)\|\le \|F'(p)(w)\|+\e_n$. Hence, using the Mean Value Theorem,

 \begin{align*}  \sup_{\substack{x,y\in B_{n+1}(p,\d_n)\\ x-p,y-p\perp v\\x\ne y}}\frac{\|F(x)-F(y)\|}{\|x-y\|}\le & \sup_{\substack{x,y,z \in B_{n+1}(p,\d_n)\\ x-p,y-p \perp v\\x\ne y}}\frac{\|F'(z)(x-y)\|}{\|x-y\|} \le   \sup_{\substack{z \in B_{n+1}(p,\d_n)\\ u \in B_{n+1}(0,2 \d_n) \\ u \perp v\\u\ne 0}}\frac{\|F'(z)(u)\|}{\|u\|} \\
 = & \sup_{\substack{z \in B_{n+1}(p,\d_n)\\ d \in (0,2 \d_n) \\ w \perp v\\ \|w\|=1}}\frac{\|F'(z)(d w)\|}{\|d w\|}= \sup_{\substack{z \in B_{n+1}(p,\d_n) \\ w \perp v\\ \|w\|=1}} \|F'(z)(w)\| \\ 
 \le  &   \sup_{\substack{w \perp v\\ \|w\|=1}} \|F'(p)( w)\|+\e_n = \sup_{\substack{w \perp v\\ \|w\|=1}} \|D_w F(p)\|+\e_n .\end{align*}
  Then, taking the limit when $n\to\infty$, we obtain
\[\omega_F(p,v)\le \sup_{\substack{w\perp v\\\|w\|=1}}\|D_wF(p)\|.\]
   
On the other hand, assume $w\in\bS^n$ and $w\perp v$. Then $F(p+tw)=F(p)+t(D_wF(p)+g(t))$ where $g$ is continuous and $\lim_{t\to0}g(t)=0$. Therefore,
\[\|D_wF(p)\|=\left\|\frac{F(p+tw)-F(p)}{t}-g(t)\right\|\le \sup_{\substack{x,y\in B_{n+1}(p,t)\\ x-p,y-p\perp v\\x\ne y}}\left[\frac{\|F(x)-F(y)\|}{\|x-y\|}+|g(t)|\right].\]
Taking the limit when $t$ tends to zero, $\|D_wF(p)\|\le\omega_F(p,v)$, which ends the proof.
\end{proof}

\begin{rem} This definition of the modulus of continuity $\omega_F(\cdot,\cdot)$ is somewhat similar to the definition of strong absolute differentiation which appears in \cite[expression (1)]{ChIn}:

\noindent Let $(X,d_X)$ and $(Y,d_Y)$ be two metric spaces and consider $F:X\to Y$ and $p\in X$. We say $F$ is \emph{strongly absolutely differentiable at $p$} if and only if the following limit exists:
\[F^{|\prime|}(p):=\lim_{\substack{(x,y)\to(p,p)\\ x\ne y}}\frac{d_Y(F(x),F(y))}{d_X(x,y)}.\]

However, notice that there some important differences between $\omega_F(\cdot,\cdot)$ and $F^{|\prime|}$ when $X=\bR^n$ and $Y=\bR^m$. First, since $\omega(\cdot,\cdot)$ is defined with a supremmum, $\omega(\cdot,\cdot)$ is well defined in more cases than $F^{|\prime|}$. Also, in the definition of $\omega_F(\cdot,v)$, we are avoiding the direction of a certain vector $v$. This means that, while strong absolute differentiation implies continuity at the point (see \cite[Theorem 3.1]{ChIn}), $\omega(\cdot,\cdot)$ does not.\par
Regarding the similarities, when the partial derivatives of $F$ exist, $F^{|\prime|}=\|\sum_{k=1}^n\frac{\partial F}{\partial x_k}\|$ (see \cite[Theorem 3.6]{ChIn}).
\end{rem}
\begin{exa}\label{examc}Consider again $F(x,y):=1+(y-x^2)^{\frac{2}{3}}$ and $S=\{(x,y)\in\bR^2\ : y=x^2\}$. As was stated in Example \ref{exa-change}, we have that $F|_{\bR^2\backslash S}\in\cC^\infty(\bR^2\backslash S)$ and
\[\nabla F(x,y)=\frac{2}{3}(y-x^2)^{-\frac{1}{3}}(-2x,1),\quad \mbox{for}\ (x,y)\in\bR^2\backslash S.\]
Therefore, $\omega(p,v)<+\infty$ for every $(p,v)\in (\bR^2\backslash S)\times\bP^1$.\par  
On the other hand, for $p=(x_0,x_0^2)\in S$ and $v=(v_1:v_2)\in\bP^1$, if $x=(x_1,y_1)-p\perp v$ then $x=\l(-v_2,v_1)+p$ for some $\l\in \bR$. Analogously, we take $y=\mu(-v_2,v_1)+p$ for some $\mu\in \bR$. Hence, 
\begin{align*}\omega_F(p,v)= & \lils{\substack{(x,y)\to(p,p)\\x-p,y-p\perp v\\x\ne y}}\frac{\|F(x)-F(y)\|}{\|x-y\|}=\lils{\substack{(\l,\mu)\to(0,0)\\\l\ne \mu}}\frac{|F(\l(-v_2,v_1)+p)-F(\mu(-v_2,v_1)+p)|}{\|(\l-\mu)(-v_2,v_1)\|} \\= &
\lils{\substack{(\l,\mu)\to(0,0)\\ \l\ne \mu}}\frac{|[\l(2x_0v_2+v_1)-\l^2v_2^2]^\frac{2}{3}-[\mu(2x_0v_2+v_1)-\mu^2v_2^2]^\frac{2}{3}|}{|\l-\mu|} \end{align*}
We now can consider two cases: $(v_1:v_2)=(-2x_0:1)$ and $(v_1:v_2)\ne(-2x_0:1)$. In the first case, taking into account that $z^2+z+1\ge 3/4$ for every $z\in\bR$,
\begin{align*}\omega_F(p,v)= & \lils{\substack{(\l,\mu)\to(0,0)\\\l\ne \mu}} \frac{|(-\l^2v_2^2)^\frac{2}{3}-(-\mu^2v_2^2)^\frac{2}{3}|}{|\l-\mu|}=\lils{\substack{(\l,\mu)\to(0,0)\\\l\ne \mu}} \frac{|\mu^\frac{4}{3}-\l^\frac{4}{3}||v_2|^\frac{2}{3}}{|\l-\mu|} \\= & |v_2|^\frac{2}{3}\lils{\substack{(\l,\mu)\to(0,0)\\\l\ne \mu}} \left|\mu^\frac{1}{3}+\frac{\l}{\mu^\frac{2}{3}+\mu^\frac{1}{3}\l^\frac{1}{3}+\l^\frac{2}{3}}\right|=|v_2|^\frac{2}{3}\lils{\substack{(\l,\mu)\to(0,0)\\\l\ne \mu}} \left|\mu^\frac{1}{3}+\l^\frac{1}{3}\frac{1}{\(\frac{\mu}{\l}\)^\frac{2}{3}+\(\frac{\mu}{\l}\)^\frac{1}{3}+1}\right| \\ \le & |v_2|^\frac{2}{3}\lils{\substack{(\l,\mu)\to(0,0)\\\l\ne \mu}} \left|\mu^\frac{1}{3}+\frac{4}{3}\l^\frac{1}{3}\right|=0. \end{align*}
Observe that in this deduction we have assumed $\l\ne0$. It is clear that, when $\l=0$, the limit is zero as well.\par
In the case $(v_1:v_2)\ne(-2x_0:1)$ the quotient inside the limit is not bounded and $\omega_F(p,v)=+\infty$. Therefore,
\[\omega_F^{-1}([0,+\infty))=(\bR^2\backslash S)\times\bP^1\cup\{((x,x^2),(-2x:1))\in\bR^2\times\bP^1\ :\ x\in\bR\}.\]
\end{exa}

\section{Sufficient conditions ensuring a Lipschitz condition along a foliation}

The next Lemma is a key ingredient in the main result of this section. It gives an alternative expression to the rotation matrix provided by the Rodrigues' Rotation Formula and generalizes it for $n$-dimensional vector spaces.
\begin{lem}[Codesido's Rotation Formula]\label{crf} Let $x,y\in\bR^{n+1}$ and define $K_x^y\in\cM_{n+1}(\bR)$ as
\[K_x^y :=yx^T-xy^T.\]
Now, let $u,v\in \bS^n$, $v\not= -u$, and define $R_u^v\in\cM_{n+1}(\bR)$ as
\begin{equation}\label{Codfor}R_u^v:=\Id+K_u^v+\frac{1}{1+\<u,v\>}(K_u^v)^2,\end{equation}
where $\Id$ is the identity matrix of order $n+1$.

Then,  $R_u^v\in \operatorname{SO}(n+1)$ and $R_u^vu=v$, that is, $R_u^v$ is a rotation in $\bR^{n+1}$ that sends the unitary vector  $u$ to $v$. Furthermore, the function $R:\{(u,v)\in\bS^n\times\bS^n\ :\ u\ne-v\}\to \operatorname{SO}(n+1)$,  defined by $R(u,v):=R_u^v$, is analytic.
\end{lem}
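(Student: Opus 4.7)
The plan is to verify in turn three assertions---that $R_u^v u = v$, that $R_u^v$ is orthogonal, and that $\det R_u^v = +1$---and then observe that analyticity falls out of the explicit formula. The pivotal preliminary observation is that $K := K_u^v = vu^T - uv^T$ is skew-symmetric (so $K^T = -K$) and, writing $s = \langle u,v\rangle$, it maps into $\sp\{u,v\}$ via the identities $Ku = v - su$ and $Kv = sv - u$, both of which follow immediately from $u^Tu = v^Tv = 1$.

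I would then iterate these identities to obtain $K^2 u = -(1-s^2)u$, and more generally note that $K^2$ acts as multiplication by $-(1-s^2)$ on $\sp\{u,v\}$ and vanishes on its orthogonal complement (since $\ker K = \sp\{u,v\}^\perp$ whenever $u$ and $v$ are linearly independent). A direct consequence is the key algebraic identity $K^4 = (s^2-1)K^2$. With this in hand, the equality $R_u^v u = v$ follows by plugging $Ku$ and $K^2u$ into the defining formula \eqref{Codfor}; the specific coefficient $1/(1+s)$ is precisely what is needed for the three contributions to telescope to $v$.

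For orthogonality, the skew-symmetry of $K$ gives $(R_u^v)^T = \Id - K + c\,K^2$ with $c = (1+s)^{-1}$; multiplying out $(R_u^v)^T R_u^v$ cancels the terms of odd degree in $K$ and reduces to $\Id + \bigl((2c-1)+c^2(s^2-1)\bigr)K^2$, where the scalar in parentheses vanishes because $c(1+s)=1$. Since $R_u^v$ is then orthogonal and depends continuously on $(u,v)$ over the connected domain $\{(u,v)\in\bS^n\times\bS^n : u\ne -v\}$ (for $n\geq 1$), and equals $\Id$ at $u=v$, the integer-valued function $\det R_u^v\in\{\pm 1\}$ is constantly $+1$. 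Analyticity is immediate: $K_u^v$ is polynomial in the components of $(u,v)$ and $c=(1+\langle u,v\rangle)^{-1}$ is analytic wherever $\langle u,v\rangle\ne -1$, a condition equivalent to $u\ne -v$ on the sphere.

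The main obstacle is identifying and establishing the algebraic identity $K^4 = (s^2-1)K^2$ and then verifying the precise cancellation $(2c-1)+c^2(s^2-1)=0$; once those are in place, everything else reduces to straightforward substitution or a continuity argument.
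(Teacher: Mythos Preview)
Your proof is correct and follows essentially the same route as the paper: skew-symmetry of $K$, the identity $K^4=(s^2-1)K^2$, direct verification of $(R_u^v)^TR_u^v=\Id$, a connectedness argument for $\det=+1$, and substitution for $R_u^v u=v$. The only point where the paper is more careful is in actually \emph{proving} that $\{(u,v)\in\bS^n\times\bS^n: u\ne -v\}$ is connected (it does so by exhibiting it as the continuous image of $\bR^{2n+2}$ minus three codimension-$(n+1)$ linear subspaces), whereas you simply assert this.
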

\begin{proof} First, we show that 
 $R_u^v\in\operatorname{O}(n+1)$, that is, $(R_u^v)^T=(R_u^v)^{-1}$. Observe that $(K_u^v)^T=-K_u^v$ and so $[(K_u^v)^2]^T=(K_u^v)^2$. That is, $(R_u^v)^T= \Id-K_u^v+\frac{1}{1+\<u,v\>}(K_u^v)^2$.
 Therefore,
 \begin{align*}(R_u^v)^TR_u^v= & \left[\Id-K_u^v+\frac{1}{1+\<u,v\>}(K_u^v)^2\right]\left[\Id+K_u^v+\frac{1}{1+\<u,v\>}(K_u^v)^2\right] \\ = & \Id+\frac{1-\<u,v\>}{1+\<u,v\>}(K_u^v)^2+\frac{1}{(1+\<u,v\>)^2}(K_u^v)^4.\end{align*}
 Now,
 \begin{align*}(K_u^v)^2= & (vu^T-uv^T)^2=vu^Tvu^T+uv^Tuv^T-vu^Tuv^T-uv^Tvu^T=\<u,v\>(vu^T+uv^T)-(vv^T+uu^T),\\ (K_u^v)^4= & \left[\<u,v\>(vu^T+uv^T)-(vv^T+uu^T)\right]^2=\(\<u,v\>^2-1\)(K_u^v)^2.\end{align*}
 
Therefore,
\begin{align*}(R_u^v)^TR_u^v= & \Id+\frac{1-\<u,v\>}{1+\<u,v\>}(K_u^v)^2-\frac{1-\<u,v\>^2}{(1+\<u,v\>)^2}(K_u^v)^2=\Id.\end{align*}
Clearly, $R_u^v$ is  analytic on $S=\{(u,v)\in\bS^n\times\bS^n\ :\ u\ne-v\}$ and so is the determinant function. Now, we are going to prove that $S$ is a connected set: firstly, define 
the linear subspaces
\begin{displaymath}V_1:=\{z \in \bR^{2n+2} : \ z_i=-z_{n+1+i}, \quad i=1,2,\ldots n+1\},\end{displaymath}
\begin{displaymath}V_2:=\{z \in \bR^{2n+2} : \ z_i=0, \quad i=1,2,\ldots n+1\},\end{displaymath}
\begin{displaymath}V_3:=\{z \in \bR^{2n+2} : \ z_{n+1+i}=0, \quad i=1,2,\ldots n+1\},\end{displaymath}
and note that  $\codim(V_i)=n+1\ge 2$ for all $i\in \{1,2,3\}$. Then, it is know that  $X:=\bR^{n+1}\setminus (V_1 \cup V_2\cup V_3)$ is connected, see \cite[Chapter V, Problem 5]{Dieu69}, and since the projection $\pi: X \to S$ defined as
 \begin{displaymath}\pi(z)=\left(\frac{(z_1,z_2,\ldots,z_{n+1})}{\|(z_1z_2,\ldots,z_{n+1})\|},\frac{(z_{n+2},z_{n+3},\ldots,z_{2n+2})}{\|(z_{n+2},z_{n+3},\ldots,z_{2n+2})\|} \right),\end{displaymath}
  is continuous and onto, we have that $S$ is connected too.
Therefore, $|R_u^v|$ is continuous on the connected set $S$ and takes values in $\{-1,1\}$, so $|R_u^v|$ is constant. Since $|R_u^u|=|\Id|=1$  we have that $|R_u^v|=1$ on $S$, that is, $R_u^v\in\operatorname{SO}(n+1)$.\par
Last, observe that
\begin{align*}R_u^vu & =u+(vu^T-uv^T)u+\frac{\<u,v\>(vu^T+uv^T)u-(vv^T+uu^T)u}{1+\<u,v\>} \\ &=u+v-uv^Tu+\frac{\<u,v\>(v+uv^Tu)-(vv^Tu+u)}{1+\<u,v\>} \\ & =v+\frac{\<u,v\>(v+uv^Tu+u-uv^Tu)-(vv^Tu+u)+u-uv^Tu}{1+\<u,v\>}=v+\frac{\<u,v\>(v+u)-vv^Tu-uv^Tu}{1+\<u,v\>}\\ &=v+\frac{\<u,v\>(v+u)-\<u,v\>v-\<u,v\>u}{1+\<u,v\>}=v.\end{align*}
\end{proof}

\begin{rem} For $n=1$ the function $R$ admits a continuous extension to $\bS^1\times\bS^1$. Indeed, let us consider $u,v\in \bS^1$, $v\not=-u$. Then  $u=(\cos(\a),\sin(\a))$ and $v=(\cos(\b),\sin(\b))$ for some $\a,\b \in \bR$, with $\b\not= \a+(2k+1)\pi$, $k\in \bZ$. Now, a direct computation shows that 
\begin{displaymath}R_{u}^v=\left(\begin{array}{ccc} \cos(\a-\b)  & \sin(\a-\b)  \\ -\sin(\a-\b) & \cos(\a-\b) \end{array}\right).\end{displaymath}
Therefore, 
\begin{displaymath}\lim_{v\to -u} R_{u}^v =\lim_{\beta \to \alpha+\pi}  \left(\begin{array}{ccc} \cos(\a-\b)  & \sin(\a-\b)  \\ -\sin(\a-\b) & \cos(\a-\b) \end{array}\right)= \left(\begin{array}{cc}-1 & 0 \\0 & -1\end{array}\right).\end{displaymath}

However, for $n\ge 2$ the function $R$ does not admit a continuous extension to $\bS^n\times\bS^n$. To see this, consider $u\in\bS^n$, $w\in\bR^{n+1}$, $w\perp u$, $w\not=0$ and define $v(w)=(w-u)/\|w-u\|$. Observe that $v(w)\in \bS^n$, $v(w)\not= -u$, $\displaystyle\lim_{\|w\|\to 0} v(w)=-u$ and 
\[K_u^{v(w)}=\frac{1}{\|w-u\|}K_u^w.\]
Hence,
\begin{align*}R_u^{v(w)} & =\Id+\frac{1}{\|w-u\|}K_u^w+\frac{\| w-u\|}{\| w-u\|+\<u,w\>-1}\frac{1}{\| w-u\|^2}(K_u^w)^2 \\ &
=\Id+\frac{1}{\|w-u\|}K_u^w+\frac{-ww^T-\|w\|^2uu^T}{\| w-u\|(\| w-u\|-1)}.
\end{align*}

Now, consider $\bar{w}\perp u$ with $\|\bar{w}\|=1$.Therefore, if it exists,
\[\lim_{v \to -u}R_u^{v}=\lim_{t \to 0}R_u^{ v(t \bar{w})}=\Id+\lim_{t \to0}\frac{-t^2 (\bar{w}\bar{w}^T-uu^T)}{\sqrt{t^2+1}(\sqrt{t^2+1}-1)}=\Id-2 (\bar{w}\bar{w}^T-uu^T).\]
But  in $\bR^{n+1}$, with $n\ge 2$, there exist at least two independent unitary vectors $\bar{w}_1$ and $\bar{w}_2$ in $\<u\>^{\perp}$, each of them leading to a different value of the right-hand side of the previous expression. Hence, the $\displaystyle \lim_{v \to -u}R_u^{v}$ does not exist and thus $R$ can not be continuously extended to $\bS^n\times\bS^n$.
\end{rem}

The following is the main result in this section and gives sufficient conditions for the existence of a $n$-foliation which allows $F$ to satisfy condition (C2) in Theorem \ref{thmgen}. 
\begin{thm}\label{mainthm}Let $U$ be an open subset of $\bR^{n+1}$, $p_0\in U$ and $F:U\to\bR^{n+1}$ continuous. Assume there exists an open interval $J$ with $0\in J$ and a simple path $\c=(\c_1,\c_2)\in\cC^1(J, U\times\bP^n)$ such that the following conditions hold:
\begin{itemize}
\item[(i)] $\c_1(0)=p_0.$
\item[(ii)] There exist $\d,M\in\bR^+$, such that $\omega_F(\c_1(t),\c_2(t),\d)<M$ for all $t\in J$.
\item[(iii)] $\c_1'(0) \not\perp \c_2(0)$.
\end{itemize}

Then, there exists  an open neighborhood of zero $\hat{U}\subset U\subset \bR^{n+1}$ such that $\Phi(s,y)$ is
a local $n$-foliation of $\hat{U}$. Moreover, $F\circ \Phi$ and $(\Phi')^{-1}$ are Lipschitz in a neighborhood of zero when fixing the first variable.
\end{thm}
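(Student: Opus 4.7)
The plan is to construct $\Phi$ explicitly from the data in the hypothesis, using the Codesido Rotation Formula (Lemma \ref{crf}) to smoothly rotate a fixed hyperplane to be perpendicular to the varying direction $\c_2(s)$. First, since the quotient map $\bS^n\to\bP^n$ is a covering map, after shrinking $J$ I lift $\c_2$ to a $\cC^1$ curve $v:J\to\bS^n$; setting $u_0:=v(0)$ and shrinking $J$ once more so that $v(t)\ne-u_0$ for every $t\in J$, Lemma \ref{crf} yields an analytic family of rotations $R(s):=R_{u_0}^{v(s)}\in\operatorname{SO}(n+1)$ with $R(s)u_0=v(s)$ and $R(0)=\Id$. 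I then fix an arbitrary $R_0\in\operatorname{SO}(n+1)$ satisfying $R_0 e_1=u_0$ and define, for $(s,y)=(s,y_1,\dots,y_n)$ in a neighborhood of the origin,
\[\Phi(s,y_1,\dots,y_n):=\c_1(s)+R(s)R_0(0,y_1,\dots,y_n)^T,\qquad g_s(y):=\Phi(s,y).\]

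To see that $\Phi$ is a local diffeomorphism at $(0,0)$, I compute $\partial_s\Phi(0,0)=\c_1'(0)$ and $\partial_{y_i}\Phi(0,0)=R(0)R_0 e_{i+1}=R_0 e_{i+1}$ for $i=1,\dots,n$. The vectors $R_0 e_2,\dots,R_0 e_{n+1}$ form an orthonormal basis of $u_0^\perp$, which equals $\c_2(0)^\perp$, while hypothesis (iii) says $\c_1'(0)\notin\c_2(0)^\perp$. Hence the columns of $\Phi'(0,0)$ are linearly independent, so by the Inverse Function Theorem there exist open neighborhoods $J'\times V$ of $(0,0)$ and $\hat U$ of $p_0$ such that $\Phi:J'\times V\to\hat U$ is a $\cC^1$ diffeomorphism, and $\{g_s\}_{s\in J'}$ becomes a local $n$-foliation of $\hat U$ at $p_0$.

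For condition (C2) applied to $F\circ\Phi$, fix $s\in J'$ and note that $R(s)R_0$ is an isometry mapping $\<e_1\>^\perp$ onto $v(s)^\perp$, so for every $y,y'\in V$ the vectors $\Phi(s,y)-\c_1(s)$ and $\Phi(s,y')-\c_1(s)$ are perpendicular to $v(s)$, equivalently to $\c_2(s)$. Shrinking $V$ so that $\|y\|<\d$ for all $y\in V$ (the radius $\d$ in (ii) being uniform in $s$), the points $\Phi(s,y),\Phi(s,y')$ lie in $B_{n+1}(\c_1(s),\d)$, and the uniform modulus bound from (ii) yields
\[\|F(\Phi(s,y))-F(\Phi(s,y'))\|\le M\,\|\Phi(s,y)-\Phi(s,y')\|=M\|y-y'\|,\]
since $R(s)R_0$ preserves the Euclidean norm. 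Concerning $(\Phi')^{-1}$, observe that for fixed $s$ the map $y\mapsto\Phi(s,y)$ is affine, so $\Phi'(s,y)$ depends polynomially of degree at most one on $y$ with coefficients continuous in $s$, and hence $\Phi'$ is locally Lipschitz in $y$ uniformly in $s$; invertibility of $\Phi'(0,0)$ together with continuity of $\Phi'$ yields a neighborhood on which $(\Phi')^{-1}$ is bounded, and Lemma \ref{lemeq}.2 (or equivalently Corollary \ref{coreq}.3) delivers the claimed local Lipschitz property when fixing the first variable.

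I expect the most delicate point to be the uniformity argument in the third paragraph: the neighborhood $V$ must be chosen so that $\Phi(\{s\}\times V)\subset B_{n+1}(\c_1(s),\d)$ \emph{simultaneously} for every $s\in J'$, which is what makes the Lipschitz constant $M$ in (C2) independent of $s$. This works precisely because $R(s)R_0$ is an isometry for every $s$ and the radius $\d>0$ supplied by hypothesis (ii) does not depend on $s$.
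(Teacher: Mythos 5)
Your proposal is correct and follows essentially the same route as the paper: lift $\c_2$ to $\bS^n$, use the rotation formula of Lemma \ref{crf} to build $\Phi(s,y)=\c_1(s)+(\text{rotation})(0,y)$, invoke the Inverse Function Theorem via (iii) for the foliation, use the uniform modulus bound (ii) for the Lipschitz property of $F\circ\Phi$, and apply Corollary \ref{coreq}.3 for $(\Phi')^{-1}$. The only difference is cosmetic: you factor the rotation as $R_{u_0}^{v(s)}R_0$ with a fixed $R_0e_1=u_0$, whereas the paper uses $R_{e_1}^{\tilde\c_2(s)}$ directly after choosing the lift with $v_0\ne-e_1$.
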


\begin{proof}  Assume, without loss of generality, that $\c_1$ is parameterized by arc length, that is, $\|\c_1'(t)\|=1$ for all $t\in J$. Consider $\bS^n$ as covering space of $\bP^n$ with the usual projection $\pi:\bS^n\to\bP^n$. Take $v_0\in\pi^{-1}(\c_2(0))$, such that $v_0\not= -e_1$ where $e_1=(1,0,\dots,0)\in\bR^{n+1}$, and consider the lift $\til\c=(\c_1,\til\c_2):J\to V\times\bS^n$ of $\c$ such that $\til \c(0)=(p_0,v_0)$. 

%

Now, $\til\c_2$ is continuous, and $\<e_1,\til \c_2(0)\>=\<e_1,v_0\>\not=-1$ so we can consider an open interval $\til J \subset J$ where $\<e_1,\til \c_2(s)\>\ne-1$ (that is,  $\til \c_2(s)\ne-e_1$) for $s\in\til J$. Since $\til \c$ is differentiable and $\|\til \c_2(s)\|=1$ for every $s\in \til J$, we can consider the continuously differentiable function
\begin{center}\begin{tikzcd}[row sep=tiny]
\til J \arrow{r}{A} & \operatorname{SO}(n+1)\\
 s \arrow[mapsto]{r} & A(s):=R_{e_1}^{\til\c_2(s)}
\end{tikzcd}
\end{center}
where $R_u^v$ is defined as in Lemma \ref{crf}. Observe that denoting by $a_j(s)$ the columns of $A(s)$, that is,
\begin{displaymath}A(s)=\left(\begin{array}{c|c|c|c}a_1(s) & a_2(s) & \ldots & a_{n+1}(s)\end{array}\right),\end{displaymath}
we have that $a_1(s)=\til\c_2(s)$ and $\{ a_2(s),a_3(s),\ldots, a_{n+1}(s)\}$ is an orthonormal basis of $\til\c_2(s)^{\perp}$, (remember that $A(s) e_1=\til\c_2(s)$ and that $A(s)$ is an orthogonal matrix).

Now, we can define the differentiable function
$\Phi: \til J\times  \bR^{n}\to \bR^{n+1}$ given by
\begin{displaymath}\Phi(s,y):=\c_1(s)+A(s)(0,y).\end{displaymath}

\noindent {\it Claim 1. $g_s(y):=\Phi(s,y)$ is a local $n$-foliation.}

We easily compute
\begin{displaymath}\frac{\partial \Phi}{\partial s}(s,y)=\c_1'(s)+A'(s) (0,y),\end{displaymath}
\begin{displaymath}\frac{\partial \Phi}{\partial y}(s,y)=\left(\begin{array}{c|c|c|c}a_2(s) & a_3(s) & \ldots & a_{n+1}(s)\end{array}\right).\end{displaymath}
So 
\begin{displaymath}\Phi'(0,0)=\left(\begin{array}{c|c|c|c|c}\c_1'(0) & a_2(0) & a_3(0) & \ldots & a_{n+1}(0)\end{array}\right),\end{displaymath}
and since, by (iii), $\c_1'(0)\not\perp \til \c_2(0)=a_1(0)$ we have
\begin{displaymath}J_{\Phi}(0,0)=|\Phi'(0,0)|\not=0.\end{displaymath} 
Then, by the inverse function theorem there exist open sets $\hat{J}\subset \til J ,\hat{V}\subset V$ and $\hat{U}\subset U$ such that 
$\hat{J}\times\hat{V}$ contains the origin and $\Phi:\hat{J}\times\hat{V}\to\hat{U}$ is a 
diffeomorphism. Moreover, by (i), $ \Phi(0,0)=p_0$, so $\Phi(s,y)$,
a local $n$-foliation of $\hat{U}$.

\medbreak

\noindent {\it Claim 2. $F\circ \Phi$ is Lipschitz continuous in a neighborhood of zero when fixing the first variable.}

Notice that, by construction, $\Phi(s,y)- \c_1(s)\in\<\til\c_2(s)\>^{\perp}$. Now, condition (ii) implies that
\begin{align*} & \|F\circ\Phi(s,y_1)-F\circ\Phi(s,y_2)\|  =  \|F(\c_1(s)+A(s)(0,y_1))-F(\c_1(s)+A(s)(0, y_2))\| \\ \le & \omega_F(\c_1(s),\c_2(s),\d)\|\c_1(s)+A(s)(0,y_1)-\c_1(s)+A(s)(0, y_2)\|\le M \sup_{s\in \hat{J}}\|A(s)\|  \|y_1-y_2\|.\end{align*}
for every $s\in \hat{J}$ and $y_1,y_2\in B_{n}\left(0,\displaystyle\frac{\d}{\sup_{s\in \hat{J}}\|A(s)\|}\right)$.\par

\noindent {\it Claim 3. $(\Phi')^{-1}$ is Lipschitz continuous in a neighborhood of zero when fixing the first variable.}\par
Fix $s\in \hat{J}$. We have that
\[\Phi'(s,y)=\left(\begin{array}{c|c|c|c|c}\c_1'(s)+A'(s)(0,y) & a_2(s) & a_3(s) & \ldots & a_{n+1}(s)\end{array}\right).\]
Then,
\[\|\Phi'(s,x)-\Phi'(s, y)\|\le  \sup_{s\in\hat{J}}\|A(s)\| \|x- y\|,\]
so $\Phi'$ is Lipschitz continuous in a neighborhood of zero when fixing $s$. 

On the other hand, $(\Phi')^{-1}$ is a continuous function, therefore locally bounded. Hence, by Corollary \ref{coreq}.3, $(\Phi')^{-1}$ is Lipschitz continuous in a neighborhood of zero when fixing the first variable.

\end{proof}

\renewcommand{\abstractname}{Acknowledgments}
\begin{abstract}The authors want to express their gratitude towards Prof. Santiago Codesido (Universit\'e de Gen\`eve, Switzerland) for suggesting the rotation matrix expression \eqref{Codfor} and several useful discussions.
\end{abstract}

\end{document}